\numberwithin{equation}{section}
\newtheorem {thm}    {Theorem}[section]
\newtheorem {lem}      [thm]    {Lemma}
\newtheorem {prop}[thm]    {Proposition}
\newtheorem* {prop*} {Proposition}
\newtheorem*{claim*}   {Claim}
\newtheorem*{conj*} {Conjecture}
\theoremstyle{definition}
\newtheorem {rmk}    [thm]    {Remark}
\newtheorem*{rmk*}  {Remark}
\newtheorem {qst}   [thm]    {Question}
\newtheorem*{qst*} {Question}
\newtheorem* {problem*}{Problem}
\newcommand{\eq}[1]{\begin{equation*}#1 \end{equation*}}
\newcommand{\eql}[2]{\begin{equation}\label{#1}#2\end{equation}}
\newcommand{\eqs}[1]{\begin{equation*}\begin{split}#1\end{split}\end{equation*}}
\DeclareMathSymbol{A}{\mathalpha}{operators}{`A}%
\DeclareMathSymbol{B}{\mathalpha}{operators}{`B}%
\DeclareMathSymbol{C}{\mathalpha}{operators}{`C}%
\DeclareMathSymbol{D}{\mathalpha}{operators}{`D}%
\DeclareMathSymbol{E}{\mathalpha}{operators}{`E}%
\DeclareMathSymbol{F}{\mathalpha}{operators}{`F}%
\DeclareMathSymbol{G}{\mathalpha}{operators}{`G}%
\DeclareMathSymbol{H}{\mathalpha}{operators}{`H}%
\DeclareMathSymbol{I}{\mathalpha}{operators}{`I}%
\DeclareMathSymbol{J}{\mathalpha}{operators}{`J}%
\DeclareMathSymbol{K}{\mathalpha}{operators}{`K}%
\DeclareMathSymbol{L}{\mathalpha}{operators}{`L}%
\DeclareMathSymbol{M}{\mathalpha}{operators}{`M}%
\DeclareMathSymbol{N}{\mathalpha}{operators}{`N}%
\DeclareMathSymbol{O}{\mathalpha}{operators}{`O}%
\DeclareMathSymbol{P}{\mathalpha}{operators}{`P}%
\DeclareMathSymbol{Q}{\mathalpha}{operators}{`Q}%
\DeclareMathSymbol{R}{\mathalpha}{operators}{`R}%
\DeclareMathSymbol{S}{\mathalpha}{operators}{`S}%
\DeclareMathSymbol{T}{\mathalpha}{operators}{`T}%
\DeclareMathSymbol{U}{\mathalpha}{operators}{`U}%
\DeclareMathSymbol{V}{\mathalpha}{operators}{`V}%
\DeclareMathSymbol{W}{\mathalpha}{operators}{`W}%
\DeclareMathSymbol{X}{\mathalpha}{operators}{`X}%
\DeclareMathSymbol{Y}{\mathalpha}{operators}{`Y}%
\DeclareMathSymbol{Z}{\mathalpha}{operators}{`Z}%
\newcommand {\liml}   {\lim\limits}
\newcommand {\cO} {{\mathcal O}}
\DeclareMathOperator{\SL}{SL}
\DeclareMathOperator{\SO}{SO}
\DeclareMathOperator{\Stab}{Stab}
\newcommand{\eps}{\varepsilon}
\newcommand {\IGNORE}[1]  {}
\newcommand {\bsl} {\backslash}
\newcommand{\bC}{\mathbf{C}}
\newcommand{\bH}{\mathbf{H}}
\newcommand{\bN}{\mathbf{N}}
\newcommand{\bQ}{\mathbf{Q}}
\newcommand{\bR}{\mathbf{R}}
\newcommand{\bZ}{\mathbf{Z}}
\newcommand {\La} {{\Lambda}}
\newcommand\vol{\operatorname{vol}}
\newcommand{\sK}{\mathsf{K}}
\newcommand{\sM}{\mathsf{M}}
\renewcommand{\leq}{\leqslant}
\renewcommand{\geq}{\geqslant}
\DeclareMathOperator{\Mod}{mod\;}
\DeclareMathOperator{\Or}{\scriptstyle\cO}
\renewcommand{\O}{O}
\renewcommand{\o}{o}
\begin{document}
	
	\title[Unboundedness of shapes of unit lattices]{Unboundedness of shapes of unit lattices in totally real cubic fields}
	\author[E.Corso]{Emilio Corso}
	\address{Pennsylvania State University, Department of Mathematics, 203 McAllister Building, State College, PA 16802}
	\email{corsoemilio2@gmail.com}
	\author[F.Rodriguez Hertz]{Federico Rodriguez Hertz}
	\address{Pennsylvania State University, Department of Mathematics, 328 McAllister Building, State College, PA 16802}
	\email{frj@psu.edu}
	\date{\today}
	
	
	\begin{abstract}
		The question of the distribution of shapes of unit lattices in number fields, pioneered by Margulis and Gromov, has lately attracted considerable interest, not least because of the lack of available results. Here we prove that the set of shapes of orders of totally real cubic fields is unbounded in the modular surface.
	\end{abstract}
	\maketitle
	
	\tableofcontents

	\section{Introduction}

	As pointed out by A.~Reznikov in the Mathoverflow conversation~\cite{mathoverflow}, Margulis and Gromov have contemplated the question of the distribution of unit lattices in number fields, when the latter are ordered in some dynamical, arithmetic or geometric fashion. Though the question can be formulated for arbitrary signatures, we confine ourselves to the exposition of the totally real case, in accordance with the setup of our result.

	Let $\sK$ be a totally real number field of degree $n$, $\Sigma$ the set of its complex field embeddings $\sK\to \bC$, all with image contained in $\bR$. Let \eql{eq:logemb}{\lambda\colon \sK^{\times}\to \bR^{\Sigma}\;, \quad \alpha\mapsto (\log|\sigma(\alpha)|)_{\sigma\in \Sigma}}
	be the logarithmic embedding of the unit group of $\sK$. The classical Dirichlet's unit theorem (see, for instance,~\cite[\textsection I.7 and \textsection I.12]{Neukirch}, more generally~\cite{Neukirch} serves as a reference for all the algebraic number theory used in this article) gives that the image of the unit group $\Or^{\times}$ of any order $\Or\subset \sK$ is a lattice in the trace-zero hyperplane \eq{H=\biggl\{(x_{\sigma})\in \bR^\Sigma:\sum x_{\sigma}=0  \biggr\}\;.} If we fix an isometric\footnote{With respect to the standard Euclidean metrics on both $H$ and $\bR^{n-1}$.} identification of $H$ with $\bR^{n-1}$, and rescale the resulting lattice to have unit covolume, we obtain a unimodular lattice in $\bR^{n-1}$. Choosing a different isometry $H\to \bR^{n-1}$ yields potentially a different unimodular lattice; all those are, however, isometric images of each other. Therefore, we may unambiguously associate to $\Or$ an element $\Delta_{\Or^{\times}}$ in the locally symmetric space
	\eq{\SL_{n-1}(\bZ)\bsl \SL_{n-1}(\bR)/\SO_{n-1}(\bR)\;,}
	identified, as is custom, with the set of unimodular lattices in $\bR^{n-1}$ up to orientation-preserving linear isometries;
we shall refer to $\Delta_{\Or^{\times}}$ as the \emph{shape} of the unit group $\Or^{\times}$.

There is a classical dynamical interpretation of such objects, as shapes of stabilizers of periodic torus orbits on the space of unimodular lattices. We briefly recall the connection, from which our own interest in the problem arose, in order to motivate a possible ordering of the shapes, which in turn offers a natural formulation for the question about their distribution; for the details, we refer to~\cite[\textsection 4.1.2]{ELMV} or~\cite{Shapira-Weiss}. Consider the positive diagonal subgroup $A<\SL_n(\bR)$, acting by right translations on the homogeneous space $X_n=\SL_n(\bZ)\bsl \SL_n(\bR)$, identified with the set of unimodular lattices in $\bR^n$. The $A$-orbit of a point $x\in X_n$ is compact if and only if the stabilizer $\Stab_A(x)$ is a lattice in $A$, the latter group being isomorphic as a topological group to $\bR^{n-1}$. Compact $A$-orbits arise from complete modules in totally real degree-$n$ number fields. A complete module $\sM$ in such a field $\sK$ is a free $\bZ$-submodule of rank $n$; its image under the embedding 
\eql{eq:embedding}{\sK\to \bR^{\Sigma}\;, \quad \alpha\mapsto (\sigma(\alpha))_{\sigma\in \Sigma}}
is, upon rescaling, is a unimodular lattice $\La_{\sM}$ in $\bR^{\Sigma}$, which we identify with $\bR^n$ by choice of a linear isomorphism. The $A$-orbit of $\La_M$ is compact; to see this, consider the multiplier ring $\Or_{\sM}=\{\alpha\in \sK: \alpha \sM\subset \sM \}$, which is an order in $\sK$ (equal to $\sM$ if and only if $\sM$ is itself an order). The intersection with $A$ of the image of the unit group $\Or_{\sM}^{\times}$ under the map in~\eqref{eq:embedding} (where we see every element of such image acting naturally as a diagonal matrix on $\bR^{\Sigma}\simeq \bR^{n}$) is a finite-index subgroup of the latter, for units have norm $N_{\sK/\bQ}(\alpha)\in \{\pm 1\}$. Since $\Or_M^{\times}$ contains a free $\bZ$-submodule of rank $n-1$, by Dirichlet's unit theorem, the same must hold for $\Stab_{A}(\La_{\sM})$, which is therefore a lattice. It is straightforward to verify that $\Stab_{A}(\La_{\sM})$ is commensurable with the lattice $\lambda(\Or_{\sM}^{\times})$ arising from the logarithmic embedding in~\eqref{eq:logemb}. We prefer to discuss shapes of the latter, instead of the former, as it is more natural from a number-theoretic perspective. 

It is a fact that every compact $A$-orbit in $X_n$ arises in the previously explicated manner; to be precise, there is a bijective correspondence between the set of compact $A$-orbits in $X_n$ and isomorphism classes of triples $(\sK,[\sM],\Phi)$, where $\sK$ is a totally real number field of degree $n$, $[\sM]$ is an equivalence class of complete modules in $\sK$ under the equivalence relation
\eq{\sM_1\sim \sM_2 \Longleftrightarrow \text{ there is $\alpha\in \sK^{\times}$ such that }\alpha\sM_1=\sM_2\;,}
and $\Phi\colon \bR^{\Sigma}\to \bR^n$ is a linear isomorphism. Two such triples $(\sK_1,[\sM_1],\Phi_1),(\sK_2,[\sM_2],\Phi_2)$ are isomorphic, naturally, is there is a field isomorphism $\sK_1\to \sK_2$ sending $\sM_1$ to $\sM_2$ and making the obvious diagram with $\Phi_1$ and $\Phi_2$ commute.

The choice of a Haar measure $m_A$ on $A$ allows to speak of the volume $\vol(A\cdot x)$ of a compact $A$-orbit with respect to $m_A$, which is the $m_A$-measure of any fundamental domain for $\Stab_A(x)$ in $A$. Up to a uniform multiplicative constant, the volume of $A\cdot \La_{\sM}$ is the regulator of the order $\Or_{\sM}$ for any complete module $\sM\subset\sK$. For any $T>0$, one can then consider the finite collection
\eq{\{ C \text{ compact $A$-orbit in }X_n: \vol(C)\leq T  \}}
(for the finiteness claim, see~\cite{Oh} or~\cite{ELMV}); the uniform probability measure on this collection gives a finitely supported Borel probability measure $\rho_T$ on $\SL_{n-1}(\bZ)\bsl \SL_{n-1}(\bR)/\SL_{n-1}(\bR)$ via pushforward by the map $A\cdot \La_{\sM}\to \Delta_{\Or_{\sM}^{\times}}$. 
\begin{rmk}
	For two alternative, equally meaningful ways of ordering compact $A$-orbits, or more generally isomorphism classes of orders for number fields of arbitrary signature and fixed degree, see~\cite{mathoverflow}.
\end{rmk}

One can then ask:
\begin{qst}(Asymptotic distribution of shapes)
	What are the limit points, in the weak$^*$ topology, of the family $(\rho_T)_{T>0}$ as $T\to\infty$?
\end{qst}
From~\cite{mathoverflow} we learned that Margulis expects most of the mass of $\rho_T$ to escape to infinity as $T\to\infty$, with respect to geometric ordering given by the volume. On the other hand, choosing the \emph{arithmetic ordering} by the discriminant (see~\cite{mathoverflow}), the conjecture is that the associated new family $(\rho'_T)_{T>0}$ of finitely supported probability measures should equidistribute in $\SL_{n-1}(\bZ)\bsl \SL_{n-1}(\bR)/\SO_{n-1}(\bR)$, that is, it should converge to the uniform probability measure $\mu$ on the latter space, which is the projection of the unique $\SL_{n-1}(\bR)$-invariant Borel probability measure on the finite-volume quotient $\SL_{n-1}(\bZ)\bsl \SL_{n-1}(\bR)$.

To date, no promising approach to gain a full distributional understanding appears to have emerged\footnote{Here the related work of Bhargava and Harron~\cite{Bhargava-Harron} on the additive counterpart of the question deserves mention: it addresses the related question of the distribution of (natural orthogonal projections of) shapes of \emph{integer lattices} in cubic, quartic and quintic number fields, establishing equidistribution for those with Galois group given by the symmetric group.}, to the best of the authors' knowledge; yet, one can aim for a coarser, topological understanding of the set of shapes. David and Shapira have initiated this study in~\cite{David-Shapira}, in connection with the problem of understanding the topology of the set of $A$-invariant end ergodic probability measures on $\SL_{n}(\bZ)\bsl \SL_{n}(\bR)$ for $n\geq 3$, and formulated a series of three conjectures~\cite[Conjecture 1.1]{David-Shapira} about the topological nature of the set of shapes of unit lattices in cubic fields (that is, the case $n=3$), culminating in the claim that the latter set is dense in $\SL_{2}(\bZ)\bsl \SL_2(\bR)/\SO_2(\bR)$. 

Our result solves the first of these conjectures, corresponding to the weaker claim that the set of shapes is unbounded. We identify $\SL_2(\bR)/\SO_2(\bR)$ with the hyperbolic plane $\bH=\{x+iy\in \bC:y>0  \}$ via the transitive $\SL_2(\bR)$-action on $\bH$ by Möbius transformations
\eq{\begin{pmatrix}a&b\\ c&d\end{pmatrix}\cdot z=\frac{az+b}{cz+d}\;,}
with $\SO_2(\bR)$ being the stabilizer of $i$. Shapes of unit groups of orders of totally real cubic fields become thus elements of the modular surface $\SL_2(\bZ)\bsl \bH$.

\begin{thm}
	\label{thm:main}
	The collection of shapes of unit groups of orders of totally real cubic number fields is unbounded in the modular surface: more precisely, the closure of the set 
	\eq{\{ \Delta_{\cO^{\times}}:\cO \text{ order of a totally real cubic number field $\sK$}  \}}
	in $\SL_2(\bZ)\bsl \bH$ is not compact.
\end{thm}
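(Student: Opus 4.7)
\emph{Plan.} I would prove Theorem~\ref{thm:main} by exhibiting a sequence of orders $\cO_n$ in varying totally real cubic fields whose shapes $\Delta_{\cO_n^\times}$ leave every compact subset of $\SL_2(\bZ)\bsl\bH$. By Mahler's compactness criterion applied in the modular surface, this reduces to producing, for each $n$, a non-torsion unit $u_n\in \cO_n^\times$ satisfying
\eq{\frac{\|\lambda(u_n)\|_{H}^{2}}{\mathrm{Reg}(\cO_n)}\goesto 0\;,}
where the denominator is the covolume of the unit lattice $\lambda(\cO_n^\times)$ in $H$ with respect to the induced Euclidean structure.

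\emph{Reduction to the non-Galois case.} A first observation narrows the search. If $\sK$ is Galois (cyclic) cubic over $\bQ$, the group $\mathrm{Gal}(\sK/\bQ)\simeq \bZ/3\bZ$ cyclically permutes the three coordinates of $H\subset \bR^\Sigma$; this permutation is realized in $H\simeq \bR^2$ as a rotation of order three, under which the lattice $\lambda(\cO_\sK^\times)$ is invariant. The only lattices in $\bR^2$ fixed by an order-three rotation are positive scalar multiples of the hexagonal lattice, so the shape of $\cO_\sK^\times$ is the single point of $\SL_2(\bZ)\bsl\bH$ represented by the hexagonal lattice. Hence any unbounded sequence of shapes must be drawn from non-Galois (i.e., with Galois closure of type $S_3$) totally real cubic fields, and I would restrict attention to these.

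\emph{Construction.} The heart of the argument is to exhibit an explicit infinite family of non-Galois totally real cubic fields $\sK_n=\bQ(\alpha_n)$ of discriminant tending to infinity, equipped with a distinguished unit $u_n\in \cO_{\sK_n}^\times$ whose logarithmic image has bounded length. A natural design is to choose a one-parameter family of monic integer polynomials $P_n(X)$ whose three real roots lie at very disparate magnitudes, forcing a specific polynomial combination of $\alpha_n$ to have all three conjugates close to $\pm 1$; this provides $u_n$. One would then verify that $\mathrm{Reg}(\cO_{\sK_n})$ grows at least polynomially in $n$, either by an explicit description of a fundamental system of units in the family or indirectly via class-number/regulator estimates of Brauer--Siegel type combined with discriminant growth.

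\emph{Main obstacle.} Controlling the displayed ratio is the delicate point. Dobrowolski's theorem bounds $\|\lambda(u_n)\|$ from below by an absolute positive constant, so the decay to zero must come entirely from the denominator, while Northcott's theorem prohibits the \emph{same} short unit from appearing in infinitely many different cubic fields---so the construction must genuinely be parametric. One must further ensure that $u_n$ is not already a large power of a still shorter fundamental unit, which would inflate the regulator proportionally and defeat the construction. Overcoming this requires an explicit identification of $\cO_{\sK_n}^\times$ in the chosen family, which is the number-theoretic crux of the proof.
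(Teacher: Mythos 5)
The reduction in your opening paragraph is correct: by Mahler's criterion, $\Delta_{\cO^{\times}}$ is high in the cusp precisely when some non-torsion $u\in\cO^{\times}$ has $\|\lambda(u)\|^2/\mathrm{Reg}(\cO)$ small. But the construction you then propose cannot work as stated. You ask for infinitely many distinct totally real cubic fields carrying non-torsion units $u_n$ ``whose logarithmic image has bounded length,'' and you assert that the decay of the ratio ``must come entirely from the denominator.'' This is impossible, and your own citation of Northcott shows why: a bound $\|\lambda(u_n)\|\leq C$ forces every conjugate of $u_n$ to lie in $[e^{-C},e^{C}]$ in absolute value, so the integer coefficients of the minimal polynomial of $u_n$ (a cubic with constant term $\pm 1$, since a non-torsion unit of a cubic field generates it) are bounded; hence there are only finitely many such units \emph{in total} across all totally real cubic fields --- not merely finitely many fields containing any single one of them, which is the weaker statement you extract. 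Making the family ``parametric'' does not circumvent this: any infinite sequence of distinct non-torsion units of bounded degree has $\|\lambda(u_n)\|\to\infty$. Your more specific design --- all three conjugates of $u_n$ close to $\pm 1$ --- fails even faster: two conjugates would then be within $2\eps$ of each other, forcing the discriminant of an irreducible integer cubic, a nonzero integer, to have absolute value less than $1$. The only viable mechanism is that \emph{both} $\|\lambda(u_n)\|^2$ and $\mathrm{Reg}(\cO_n)$ tend to infinity with their ratio tending to $0$ slowly, i.e.\ one needs a fundamental system $(u_n,v_n)$ whose two logarithmic vectors have wildly disparate lengths.

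That is what the paper's proof arranges: for $\cO_t=\bZ[\theta_t^{(1)}]$ with $\theta_t^{(1)}$ a root of $f_{a_t,1,t}(X)=f_0(X)+tX(a_tX-1)$ and $a_t\asymp t^{\alpha}$, one gets $\|\lambda(\theta_t^{(1)})\|\asymp\log t$ while $\|\lambda(a_t\theta_t^{(1)}-1)\|\asymp\alpha\log t$, so the ratio $\|\lambda(a_t\theta_t^{(1)}-1)\|^2/\mathrm{Reg}(\cO_t)$ is $\asymp\alpha$; letting $t\to\infty$ for fixed $\alpha$ and then $\alpha\to 0$ produces closure points escaping the cusp. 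Two further remarks on your sketch. First, your ``main obstacle'' paragraph has the index issue backwards: if your candidate unit is a proper power, or more generally if your candidate pair has index $m>1$ in $\cO^{\times}$ modulo torsion, then the true regulator is \emph{smaller} (by the factor $m$) than the relative regulator you computed, which shrinks your denominator and hurts you; the genuine task is to rule out $m>1$, which the paper does by playing Cusick's lower bound $R\geq\frac{1}{16}\log^2(D/4)$ against an upper bound $R'<\frac{1}{8}\log^2(D/4)$ for the relative regulator of the candidate pair. Brauer--Siegel, which bounds $hR$ rather than $R$, would not accomplish this. Second, your Galois reduction is correct only for Galois-stable orders (e.g.\ maximal ones); a non-maximal order of a cyclic cubic field need not be carried to itself by the Galois action, so its unit lattice need not be hexagonal --- though this point is harmless, since the reduction is not needed anyway.
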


In fact, the argument shall provide a smooth curve contained in the closure of the set of shapes, which escapes to the cusp of the modular surface. The proof of Theorem~\ref{thm:main} is given in \textsection\ref{sec:proof}.

While the authors of the present manuscript were working on the remaining two conjectures, the preprint~\cite{Dang-Li} has appeared with resolution of the third (and stronger) one by Dang, Gargava and Li\footnote{The three authors manage to attain density even on the level of the unit tangent bundle $\SL_2(\bZ)\bsl \SL_2(\bR)$.}; we also refer to \emph{loc.~cit.} for further elaborations upon the importance of the question considered in this article. In the next subsection we present a brief comparison of the two approaches. 

\subsection{An outline of the proof}
The argument draws inspiration from the line of reasoning in~\cite{David-Shapira}, where it is shown that the closure of the set of shapes in the modular surface contains a countable collection of "low-lying" smooth curves. In order to locate shapes in a way which allows to pinpoint accumulation points, we confine our attention to cubic orders of the form $\Or=\bZ[\theta]$, $\theta$ being a root of a cubic monic irreducible polynomial $f(X)\in \bZ[X]$ with only real roots, with the property that the unit group $\Or^{\times}$ admits a particular fundamental system of linear units (the definition is recalled in \textsection\ref{subsec:linearunits}), of the form $(\theta,a\theta-b)$ for fixed (necessarily coprime) integers $a,b$. This class of orders appears already in works of Grundman~\cite{Grundman}, Minemura~\cite{Minemura-first,Minemura-second} and Thomas~\cite{Thomas}, and polynomials giving rise to such orders have already been fully described in~\cite{David-Shapira}; they all have the form
\eql{eq:polynomials}{f_{a,b,t}(X)=f_0(X)+tX(aX-b)}
for a fixed cubic monic $f_0(X)\in \bZ[X]$, where $t$ ranges over the integers. As $t$ tends to $+\infty$, there are two families of positive roots $(\theta_t^{(1)}),(\theta_t^{(2)})$ of $f_{a,b,t}(X)$ tending, respectively, to $0$ and $b/a$, which are the roots of the second-degree summand in~\eqref{eq:polynomials}. Interestingly, letting $t\to+\infty$ for \emph{fixed} $a,b$, one can show by a direct simplification of our arguments that the shapes of the orders $\Or_t=\bZ[\theta_t^{(1)}]$ converge, in $\SL_{2}(\bZ)\bsl \bH$, to the primitive sixth root of unity $\frac{1}{2}+i\frac{\sqrt{3}}{2}$. 

Instead, we let the parameter $a$ vary as a function of $t$, and set $b=1$\footnote{The converse case of $a=1$ and $b$ varying with $t$ leads to the so-called \emph{simplest cubic fields} (from the ease of computation of their algebraic invariants), already investigated in works of Cassels~\cite{Cassels}, Ankeny-Brauer-Chowla~\cite{ABC}, Cusick~\cite{Cusick} and Washington~\cite{Washington}.}. Making essential use of quantitative error estimates on the distances $|\theta_t^{(1)}|$, $|\theta_t^{(2)}-a_t^{-1}|$, and judiciously choosing the growth rate of $a_t$, which we set to be $t^{\alpha}$ for small real $\alpha>0$, we obtain an explicit smooth curve, parametrized by $\alpha$, of accumulation points for the shapes of the orders $\Or_t$, which we show leaves every compact set in the modular surface.

Whereas our overarching strategy is definitely akin in spirit, the approach of Dang, Gargava and Li to obtain density in~\cite{Dang-Li} proceeds by studying varying families of orders inside the simplest cubic fields; these collection of "extracted" orders is then shown to contain a horospherical piece and to be invariant by the action of the positive diagonal subgroup of $\SL_2(\bR)$ on $\SL_2(\bZ)\bsl \SL_2(\bR)$, a feature which, by a classical "banana-trick" argument of Margulis (going back to his thesis~\cite{Margulis}) yields density.

\begin{rmk}
	It seems possible to carry out similar computations and estimates for orders $\bZ[\theta]$ with linear fundamental units of the more general form $(a\theta-b,c\theta-d)$ where $a,b,c,d\in \bZ$, $ad-bc\neq 0$ and the pairs $(a,b),(c,d)$ are coprime. The case $ad-bc=1$ had been previously characterized in~\cite[Theorem 3.7]{David-Shapira}, but not exploited there for the purpose of eliciting information about the closure of the set of shapes of cubic orders.  
	
	For fixed $a,b,c,d$ as above, under further appropriate conditions, one can prove that there are countably many monic cubic irreducible polynomials $f_{a,b,c,d,t}(X)\in \bZ[X]$ giving rise to such orders, where $t$ ranges in $\bZ$. By letting $a=a_t$ and $c=c_t$ vary with $t$, in such a way that $a_t$ and $c_t$ are of the same order, respectively, of $t^{\alpha}$ and $t^{\gamma}$ for unrelated real parameters $\alpha,\gamma>0$, both close to $0$, and choosing $b_t$ and $d_t$ accordingly, it stands to reason to expect that one can attain non-emptiness of the interior of the closure of the set of shapes, perhaps even density on the modular surface. However, in light of the concurrent appearance of the density result of~\cite{Dang-Li}, we dediced not to pursue this further.
\end{rmk}

\subsection*{Notation}
If $g\colon \bZ\to \bR_{\geq 0}$ is a function, we use Landau's notation $\O(g)$ (respectively, $\Theta(g)$) to indicate a function $f\colon \bZ\to \bR$ such that there are $C>0$ and an integer $t_0\geq 1$ with $|f(t)|\leq Cg(t)$ (respectively, $C^{-1}g(t)\leq |f(t)|\leq Cg(t)$) for all $t\geq t_0$. To indicate an unspecified function $f\colon \bZ\to \bR$ which is infinitesimal as $t\to+\infty$ we use the usual symbol $\o(1)$.

\section{Cubic orders with linear fundamental units and unboundedness of shapes}
\label{sec:proof}

\subsection{Orders with fundamental systems of linear units} 
\label{subsec:linearunits}
Let $\Or$ be an order of a totally real cubic field $\sK$. The unit group $\Or^{\times}$, modulo its torsion subgroup, is known by Dirichlet's unit theorem to be a free abelian group of rank $2$. A pair $(\theta_1,\theta_2)$ of $\bZ$-independent generators for the latter group will be referred to as a \emph{system of fundamental units} for $\Or$. 

We will consider orders $\Or=\bZ[\theta]$, with $\theta$ a root of a cubic monic irreducible polynomial $f(X)$, with linear fundamental units, of the form $(\theta,a\theta-b)$ for some $a,b\in \bZ$; the shape of their unit group is thus the equivalence class of the lattice\footnote{We indicate with $\langle v_1,\dots,v_k\rangle_{\bZ}$ the $\bZ$-submodule generated by a collection of vectors $v_1,\dots,v_k$ in some Euclidean space $\bR^d$.} 
\eq{\langle (\theta, \theta^{(2)},\theta^{(3)}),(a\theta-b,a\theta^{(2)}-b,a\theta^{(3)}-b)  \rangle_{\bZ}\subset H\simeq \bR^2\;, }
where $\theta^{(2)},\theta^{(3)}$ are the two remaining roots of $f(X)$. Those orders have been fully classified in~\cite{David-Shapira}; following the terminology in \emph{loc.~cit.}, we say that a pair $(a,b)$ of integers is a \emph{mutually cubic pair} if $a^3\equiv 1 \;(\Mod b)$ and $b^3\equiv 1 \;(\Mod a)$.
	
	\begin{prop}[{\cite[Theorem 3.5]{David-Shapira}}]
		Let $(a,b)$ be a mutually cubic pair of coprime non-zero integers, and consider the family of polynomials
		\eq{f_{a,b,t}(X)=X^3+\frac{(a^3-1)^2-b^3}{ab^2}\;X^2-a\;\frac{a^3-1}{b}\;X+1+tX(aX-b)}
for $t\in \bZ$. Then, for all $t\in \bZ$, all complex roots of $f_{a,b,t}(X)$ are real; furthermore, if $f_{a,b,t}(X)$ is irreducible over $\bQ$, $\theta$ is a root of $f_{a,b,t}(X)$ and $\sK=\bQ[\theta]$, then $\theta$ and $a\theta-b$ are units in the order $\bZ[\theta]$ of the cubic field $\sK$.
	\end{prop}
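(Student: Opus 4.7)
The proposition splits into two largely independent claims that I would dispatch separately: first, that $\theta$ and $a\theta-b$ are units in $\bZ[\theta]$ whenever $f_{a,b,t}$ is irreducible (a direct norm computation exploiting the particular shape of the coefficients), and second, that every root of $f_{a,b,t}$ is real for every $t\in\bZ$ (a discriminant-positivity statement, which is the substantial part).

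\textbf{Unit property.} The two numerical inputs that drive everything are $f_{a,b,t}(0)=1$, immediate from the formula, and $f_{a,b,t}(b/a)=1/a^3$. The latter relies on the observation that the perturbation $tX(aX-b)$ vanishes at $X=b/a$, reducing the evaluation to $f_{a,b,0}(b/a)$; substituting $P=\frac{(a^3-1)^2-b^3}{ab^2}$ and $Q=-\frac{a(a^3-1)}{b}$ and simplifying the resulting expression produces $1/a^3$ (the cancellation uses only the definitions of $P,Q$, no arithmetic condition on $(a,b)$). These evaluations translate into
\eq{N_{\sK/\bQ}(\theta)=-f_{a,b,t}(0)=-1,\qquad N_{\sK/\bQ}(a\theta-b)=-a^3f_{a,b,t}(b/a)=-1,}
using that $f(X)=\prod(X-\theta^{(i)})$ and $\prod(a\theta^{(i)}-b)=-a^3f(b/a)$. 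To upgrade from units in $\cO_\sK$ to units in $\bZ[\theta]$: writing $f_{a,b,t}(X)=X^3+pX^2+qX+1$ with $p,q\in\bZ$, the identity $\theta(\theta^2+p\theta+q)=-1$ places $\theta^{-1}$ inside $\bZ[\theta]$; for $a\theta-b$, the polynomial $g(Y):=a^3f_{a,b,t}((Y+b)/a)$ is monic in $\bZ[Y]$ with constant term $a^3f_{a,b,t}(b/a)=1$ (the integrality of the intermediate coefficients follows from the mutually cubic divisibilities $b\mid a^3-1$ and $a\mid b^3-1$), so the same reasoning applied to $g$ yields $(a\theta-b)^{-1}\in\bZ[a\theta-b]\subseteq\bZ[\theta]$.

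\textbf{Reality of the roots, and the main obstacle.} Writing $p=P+ta$ and $q=Q-tb$, the cubic discriminant
\eq{\Delta(t)=p^2q^2-4p^3-4q^3+18pq-27}
is a polynomial in $t$ of degree $4$ with positive leading coefficient $a^2b^2$, and the claim is equivalent to $\Delta(t)\geq 0$ for every real $t$. My primary strategy would be to expand $\Delta(t)$ and invoke the integrality relations $b\mid a^3-1$ and $a\mid b^3-1$ coming from the mutually cubic hypothesis in order to rearrange $\Delta(t)$ as a manifestly non-negative expression — ideally a sum of squares with polynomial coefficients in $t$. A complementary geometric route is to recast the equation $f_{a,b,t}(X)=0$ as the level equation $T(X)=t$ for the rational function
\eq{T(X)=-\frac{X^3+PX^2+QX+1}{X(aX-b)}}
on $\bR\setminus\{0,b/a\}$, analyze $T$'s asymptotic behavior at the two poles and at $\pm\infty$, and compare the local extrema of $T$ on the three intervals of continuity so as to conclude that each horizontal level $t\in\bR$ has three real preimages counted with multiplicity. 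The main difficulty is the reality step itself: the mutually cubic hypothesis enters the sign of $\Delta(t)$ (or, dually, the extremum comparison for $T$) only in an opaque way, and identifying the decisive algebraic identity — the clean sum-of-squares decomposition of $\Delta(t)$, or equivalently the right coincidence between critical values of $T$ across the pole at $X=b/a$ — is where the real labor of the proof lies.
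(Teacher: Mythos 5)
The paper itself offers no proof of this proposition --- it is imported wholesale from \cite[Theorem 3.5]{David-Shapira} --- so there is no internal argument to compare yours against, and I will judge the proposal on its own terms. Your treatment of the unit property is complete and correct: the evaluations $f_{a,b,t}(0)=1$ and $f_{a,b,t}(b/a)=1/a^3$ (the latter because the perturbation $tX(aX-b)$ vanishes at $b/a$) do give $N_{\sK/\bQ}(\theta)=N_{\sK/\bQ}(a\theta-b)=-1$, and your passage from ``unit of norm $\pm1$'' to ``unit of the ring $\bZ[\theta]$'' via the monic integral polynomials with constant term $1$ satisfied by $\theta$ and by $a\theta-b$ is the right mechanism. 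The divisibilities $b\mid a^3-1$ and $a\mid (a^3-1)^2-b^3$ supplied by the mutually cubic hypothesis (together with $\gcd(a,b)=1$) are exactly what make the displayed coefficients integers, so that $\bZ[\theta]$ is an order and the argument closes.

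The reality claim is another matter. You candidly leave it open (``identifying the decisive algebraic identity \dots is where the real labor of the proof lies''), so as written the proposal does not prove the proposition; worse, the identity you are hunting for does not exist, because the statement ``for all $t\in\bZ$ all roots are real'' is false as quoted. Take $(a,b)=(1,1)$, which is a coprime pair of non-zero integers and is mutually cubic by the paper's definition. Then
\eq{f_{1,1,t}(X)=X^3+(t-1)X^2-tX+1\;,\qquad \Delta(t)=t^4-2t^3-5t^2+6t-23\;,}
and $\Delta(t)<0$ for every integer $-2\leq t\leq 3$; in particular $f_{1,1,0}(X)=X^3-X^2+1$ has discriminant $-23$ and a pair of non-real roots. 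Hence no sum-of-squares decomposition of $\Delta(t)$ can exist, and in your dual picture the two relevant critical values of $T$ genuinely straddle a gap of levels $t$ attained only once. What is true --- and all that is used anywhere in this paper --- is that $\Delta(t)$ is a quartic in $t$ with positive leading coefficient $a^2b^2$, so the roots are all real once $|t|$ is large; for the subfamily actually exploited here ($b=1$, $a=a_t$ large, $t\to+\infty$) this is re-derived elementarily in \textsection\ref{sec:proof} from the signs of $f_t$, the roots of $f_t'$, and Vieta's formulas. To salvage the general statement you would need either to restrict to $|t|$ sufficiently large or to consult the precise hypotheses of \cite[Theorem 3.5]{David-Shapira}, which the quotation here appears to elide.
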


	Henceforth, we shall consider a family of mutually cubic pairs $(a_t,1)$ depending on an integer parameter $t\geq 1$; to be precise, we fix, for each such $t$, an integer $a_t\geq 0$, upon which we shall place restrictions as the needs of our argument emerge, and consider the associated polynomial
	\eq{f_t(X)\coloneqq f_{a_t,1,t}(X)=X^3+a_t^2(a_t^3-2)X^2-a_t(a_t^3-1)X+1+tX(a_tX-1)\;.}

	\begin{lem}
		There exists an integer $A\geq 1$ such that, for all integers $t\geq 1$ and $a\geq A$, the polynomial $f_{a,1,t}(X)$ is irreducible over $\bQ$.
	\end{lem}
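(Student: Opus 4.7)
The plan is to invoke the rational root theorem. Observe that $f_{a,1,t}(X)$, after expanding the last summand, is a monic cubic in $\bZ[X]$ with constant term $1$:
\eq{f_{a,1,t}(X)=X^3+\bigl(a^2(a^3-2)+ta\bigr)X^2-\bigl(a(a^3-1)+t\bigr)X+1\;.}
Since a cubic in $\bQ[X]$ is reducible over $\bQ$ if and only if it has a rational root, and since any rational root of a monic polynomial in $\bZ[X]$ must be a rational integer dividing the constant term, the only candidates are $X=\pm 1$. It therefore suffices to verify that both $f_{a,1,t}(1)$ and $f_{a,1,t}(-1)$ are non-zero for every $t\geq 1$, provided $a$ is large enough (uniformly in $t$).

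First I would compute explicitly
\eq{f_{a,1,t}(1)=a^5-a^4-2a^2+a+2+t(a-1)\;,\qquad f_{a,1,t}(-1)=a^5+a^4-2a^2-a+t(a+1)\;.}
Both expressions are manifestly of the form $P(a)+tQ(a)$ with $P,Q\in \bZ[X]$ independent of $t$. The strategy is then to show that, for $a$ sufficiently large, both $P(a)>0$ and $Q(a)\geq 0$, whence the sum is positive for all $t\geq 1$.

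For the first identity, $P(a)=a^4(a-1)-2a^2+a+2$ and $Q(a)=a-1$; both are strictly positive for $a\geq 2$ (the dominant term $a^4(a-1)\geq 16$ overwhelms $-2a^2+a+2$). For the second, $P(a)=a^4(a+1)-2a^2-a$ and $Q(a)=a+1$ are likewise positive for $a\geq 2$ by a similar inspection. Hence the choice $A=2$ works, and one can take any $A\geq 2$.

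The argument is elementary; the only potential subtlety would be ensuring the bound is \emph{uniform} in $t$, but this is handled precisely because the $t$-dependent coefficients $a-1$ and $a+1$ are non-negative for $a\geq 1$, so increasing $t$ only makes $f_{a,1,t}(\pm 1)$ larger. There is no substantive obstacle.
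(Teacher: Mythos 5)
Your proposal is correct and follows essentially the same route as the paper: both reduce irreducibility to checking $f_{a,1,t}(\pm 1)\neq 0$ via the rational root theorem (monic, constant term $1$), compute the same two expressions, and bound them below uniformly in $t\geq 1$. Your version is marginally more explicit in isolating the $t$-independent and $t$-dependent parts and exhibiting the concrete constant $A=2$, but there is no substantive difference.
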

\begin{proof}
Since $f(X)\coloneqq f_{a,1,t}(X)$ is cubic, monic, with integer coefficients, and with constant term $1$, it is reducible over $\bQ$ if and only either $f(1)=0$ or $f(-1)=0$. We compute 
\eq{f(1)=a^5-a^4-2a^2+a+t(a-1)+2\geq a^{5}-a^4-2a^2+2a+1}
and 
\eq{f(-1)=a^5+a^4-2a^2-a+t(a+1)\geq a^{5}+a^4-2a^2+1\;,}
from which we deduce at once that $f(\pm1)\neq 0$ for all sufficiently large $a$, independently of $t$.
\end{proof}

\subsection{Estimates for the roots}
In the sequel, we operate under the assumption $a_t\geq A$, for $A$ as in the previous lemma. 
The derivative of the polynomial $f_t(X)$ is 
\eq{f_t'(X)=3X^2+2(a_t^2(a_t^3-2)+ta_t)X-(t+a_t(a_t^3-1))\;;}
under our working assumptions $t\geq 1$, $a_t\geq 1$, an elementary verification shows that that the roots of $f_t'(X)$ have different sign, and since $f_t(0)=1$ it follows that $f_t(X)$ admits two positive roots\footnote{Since the constant term of $f_t(X)$ is $1$, the third root is necessarily negative by Vieta's formulas; we shall not need it explicitly in our argument.} $\theta_t^{(1)}<\theta_t^{(2)}$. We will focus our attention on the orders \eq{\cO_t\coloneqq\bZ[\theta_t^{(1)}]}
of the cubic fields $\bQ[\theta_t^{(1)}]$.

Intuitively, as $f_t(X)$ is of the form $f_0(X)+tX(a_tX-1)$, it should be the case that $\theta_t^{(1)}$ and $\theta_t^{(2)}$ approximate respectively, as $t\to\infty$, the roots $0$ and $a_t^{-1}$ of the polynomial $tX(a_tX-1)$. We shall make use of appropriate quantifications of this phenomenon, relying on the following first (rather crude) estimate, to appreciate the significance of which we hasten to point out that we wiil ultimately choose $a_t$ to be of the same order of $t^{\alpha}$ for values of $\alpha>0$ tending to $0$.

\begin{lem}
	Fix $0<\alpha<1/4$, and suppose $a_t\leq t^{\alpha}$ for all $t\geq 1$. Then, for all positive integers $k\leq 1/\alpha-3$, there is $A'=A'(\alpha,k)>0$ such that 
	\eql{eq:firstbound}{|\theta_t^{(1)}|\leq a_t^{-k} \quad \text{and} \quad |\theta_t^{(2)}-a_t^{-1}|\leq a_t^{-k}}
	for all $a_t\geq A'$.
\end{lem}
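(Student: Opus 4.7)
I would approach both inequalities by sign analysis of $f_t$ on the positive half-line. Since $f_t$ is a monic cubic with one negative root $r_3$ and positive roots $\theta_t^{(1)}<\theta_t^{(2)}$, the sign of $f_t(x)$ for $x\geq 0$ equals that of $(x-\theta_t^{(1)})(x-\theta_t^{(2)})$: hence $f_t>0$ on $[0,\theta_t^{(1)})\cup(\theta_t^{(2)},+\infty)$ and $f_t<0$ strictly between the two roots. The first bound will therefore follow from the verification $f_t(a_t^{-k})<0$ together with a minor check that $a_t^{-k}$ does not overshoot $\theta_t^{(2)}$ (automatic for $k\geq 2$ and $a_t$ large), and the second from $f_t(a_t^{-1}-a_t^{-k})<0$ together with the a priori localization $\theta_t^{(2)}<a_t^{-1}$.

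The cornerstone is the identity $f_t(a_t^{-1})=a_t^{-3}$, by direct substitution: the summand $tX(a_tX-1)$ vanishes at $X=a_t^{-1}$, leaving $f_0(a_t^{-1})=a_t^{-3}$. A short Vieta computation gives $\theta_t^{(1)}\theta_t^{(2)}=|r_3|^{-1}=\Theta((ta_t)^{-1})$, which is much smaller than $a_t^{-2}$ (since $\alpha<1/4$ forces $ta_t\gg a_t^2$). Hence the two positive roots cannot both exceed $a_t^{-1}$, and combined with $f_t(a_t^{-1})>0$ this forces both of them into $(0,a_t^{-1})$. In particular $\theta_t^{(2)}<a_t^{-1}$, which settles the $k=1$ case of both estimates at once.

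For $k\geq 2$, the plan is to expand
\eq{f_t(a_t^{-k})=a_t^{-3k}+a_t^{5-2k}-2a_t^{2-2k}+t\,a_t^{1-2k}-a_t^{4-k}+a_t^{1-k}-t\,a_t^{-k}+1}
and to identify $-t\,a_t^{-k}$ as the dominant negative term: under $a_t\leq t^{\alpha}$ one has $t\,a_t^{-k}\geq a_t^{1/\alpha-k}$, which diverges in $a_t$ whenever $k<1/\alpha$. Each positive summand is then verified to be $o(t\,a_t^{-k})$ as $a_t\to\infty$: e.g.\ $1\ll t\,a_t^{-k}$ because $\alpha k<1$, $a_t^{5-2k}\ll t\,a_t^{-k}$ because $a_t^{5-k}\ll t$ (easy under $\alpha<1/4$), and $t\,a_t^{1-2k}\ll t\,a_t^{-k}$ because $a_t^{1-k}\to 0$ for $k\geq 2$. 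The second bound is handled analogously through a finite (cubic) Taylor expansion of $f_t$ at $a_t^{-1}$: with the explicit values
\eq{f_t'(a_t^{-1})=t+a_t^4+\O(a_t^3),\quad f_t''(a_t^{-1})=2(ta_t+a_t^5)+\O(a_t^2),\quad f_t'''\equiv 6,}
one obtains a closed-form expression for $f_t(a_t^{-1}-a_t^{-k})$ in which $-t\,a_t^{-k}$ again dominates all positive contributions, including the new one $f_t(a_t^{-1})=a_t^{-3}$, for all $a_t$ sufficiently large in terms of $\alpha$ and $k$.

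The main technical obstacle is bookkeeping rather than any deep inequality: several positive terms, coming from the coefficients of $f_t$ and from the Taylor remainders, must be simultaneously controlled against the dominant $-t\,a_t^{-k}$, uniformly for $t\geq 1$ and $a_t\geq A'$. The generous hypothesis $k\leq 1/\alpha-3$ leaves ample slack for each of these termwise comparisons to pass through.
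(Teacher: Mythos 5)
Your proposal is correct and follows essentially the same route as the paper: evaluate $f_t$ at the test points $a_t^{-k}$, $a_t^{-1}$ and $a_t^{-1}-a_t^{-k}$, check the signs (with $-ta_t^{-k}$ as the dominant term, exactly as in the paper's expansion), and locate the roots by the intermediate value theorem. Your additional observations --- the exact identity $f_t(a_t^{-1})=a_t^{-3}$ and the Vieta argument forcing $\theta_t^{(2)}<a_t^{-1}$ --- are correct refinements of details the paper leaves implicit, not a different method.
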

\begin{proof}
	We compute, for all integers $k\geq 1$, 
	\eq{f_t(a_t^{-k})=\frac{1}{a_t^{3k}}(-ta_t^{2k}+ta_t^{k+1}+a_t^{3k}-a_t^{2k+4}+a_t^{2k+1}+a_t^{k+5}-2a_t^{k+2})\;.}
	In view of the bounds assumed on $\alpha$ and $k$, the leading term of the numerator on the right-hand side is $-ta_t^{2k}$; it follows that there is some $A'_1=A'_1(\alpha,k)>0$ such that $f_t(a_t^{-k})<0$ for all $a_t\geq A_1'$. Since $f_t(0)>0$, the bound $|\theta_t^{(1)}|\leq a_t^{-k}$ is a consequence of the intermediate value theorem.
	
	The argument for the second bound $|\theta_t^{(2)}-a_t^{-1}|\leq a_t^{-k}$ runs along similar lines, by realizing that, for all $a_t\geq A'_2=A'_2(\alpha,k)$, the inequalities $f_t(a_t^{-1})>0$ and $f_t(a_t^{-1}-a_t^{-k})<0$ hold, and applying the intermediate value theorem to locate the root $\theta_t^{(2)}$ in the open interval $(a_t^{-1}-a_t^{-k},a_t^{-1})$.
\end{proof}

Hereinafter, assume $\alpha\in \bR$ and $k\in \bN^*$ are fixed so that $0<\alpha<1/4$ and $k\leq 1/\alpha-3$; we select $a_t$ in such a way that 
\eq{\sup\{A,A' \}\leq ct^{\alpha}\leq a_t\leq t^{\alpha}}
for some $0<c<1$, where $A'=A'(\alpha,k)$ is given by the last lemma.

We now give more accurate error estimates for $|\theta_t^{(1)}|$ and $|\theta_t^{(2)}-a_t^{-1}|$ via Taylor's approximation. The Taylor expansion of $f_t(X)$ at $0$ gives
\eql{eq:firstTaylor}{0=f_t(\theta_t^{(1)})=f_t(0)+f_t'(0)\theta_t^{(1)}+\delta_t^{(1)}=1-(t+a_t(a_t^3-1))\theta_t^{(1)}+\delta_t^{(1)}}
with
\eql{eq:firstbounderror}{|\delta_t^{(1)}|\leq \frac{1}{2}|f''_t(0)||\theta_t^{(1)}|^2+\frac{1}{6}|f_t^{'''}(0)||\theta_t^{(1)}|^3\leq (ta_t+a_t^5-2a_t^2)a_t^{-2k}+ a_t^{-3k}\;,}
using~\eqref{eq:firstbound} in the last inequality.
We choose the value of $k$ achieving the optimal bound in~\eqref{eq:firstbound}, namely we impose $k\geq 1/\alpha -4$. With such choices, we have from~\eqref{eq:firstbounderror} that 
\eql{eq:betterfirstbound}{|\delta_t^{(1)}|\leq  (c^{-1/\alpha}a_t^{1+1/\alpha}+a_t^5-2a_t^2)a_t^{-2k}+ a_t^{-3k}= \Theta(a_t^{9-1/\alpha})\;;}
from~\eqref{eq:firstTaylor}, we write
\eql{eq:firstgap}{\theta_t^{(1)}=\frac{1+\delta_t^{(1)}}{t+a_t(a_t^{3}-1)}=\frac{1}{t}\;\eps_t^{(1)}}
with $|\eps_t^{(1)}|=\Theta(1)$, and thus
\eql{eq:firstlimit}{\liml_{t\to\infty}\frac{\log{|\eps^{(1)}_t|}}{\log{t}}=0\;,}
as follows readily from~\eqref{eq:betterfirstbound} and the relation between $t$ and $a_t$. 

Similarly, the Taylor expansion of $f_t(X)$ at $a_t^{-1}$ yields
\eql{eq:secondTaylor}{0=f_t(\theta_t^{(2)})=f_t(a_t^{-1})+f'_t(a_t^{-1})(\theta_t^{(2)}-a_t^{-1})+\delta_t^{(2)}}
with
\eq{|\delta_t^{(2)}|\leq \frac{1}{2}|f''(a_t^{-1})||\theta_t^{(2)}-a_t^{-1}|^2+|\theta_t^{(2)}-a_t^{-1}|^3=\Theta(|a_t|^{5-1/\alpha})\;,}
the last estimate following from ~\eqref{eq:firstbound} and the fact that $|f''(a_t^{-1})|=\Theta(a_t^{1+1/\alpha})$. From~\eqref{eq:secondTaylor} we write 
\eql{eq:secondgap}{a_t\theta_t^{2}-1=-a_t\;\frac{f_t(a_t^{-1})+\delta_t^{(2)}}{f_t'(a_t^{-1})}=-a_t\;\frac{a_t^{-3}+\delta_t^{(2)}}{t+a_t(a_t^3-3)+3a_t^{-2}}=\frac{1}{t}\;\eps_t^{(2)}}
where it is easily seen that $|\eps_t^{(2)}|=\Theta(t^{-(3\alpha-1)})$, so that
\eql{eq:secondlimit}{\liml_{t\to\infty}\frac{\log|\eps_t^{(2)}|}{\log{t}}=1-3\alpha\;.}

We shall make essential use of~\eqref{eq:firstlimit} and~\eqref{eq:secondlimit} in the proof of the forthcoming proposition, as well as directly in the proof of Theorem~\ref{thm:main}.

\subsection{A system of fundamental units} Armed with the estimates of the foregoing subsection, we are now ready to prove that, with our choices of parameters, the pair $(\theta_t^{(1)},a_t\theta_t^{(1)}-1)$ is a system of fundamental units for the order $\Or_t=\bZ[\theta_t^{(1)}]$, at least when $t$ is sufficiently large.

\begin{prop}
	\label{prop:fundunits}
	Let $0<\alpha<1/4$, and let $\Or_t=\bZ[\theta_t^{(1)}]$ be the order defined above for all integers $t\geq 1$, with the coefficients $a_t$ satisfying $ct^{\alpha}\leq a_t\leq t^{\alpha}$ for some $0<c<1$. Then, for all sufficiently large $t$ (depending only on $\alpha$), $(\theta_t^{(1)},a_t\theta_t^{(1)}-1)$ is a system of fundamental units for $\cO_t$.
\end{prop}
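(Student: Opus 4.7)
The plan is to compute the regulator $R_t := R(\theta, a_t\theta - 1)$ of the subgroup $N_t := \langle \pm 1, \theta, a_t\theta - 1\rangle \subseteq \cO_t^{\times}$ (with $\theta := \theta_t^{(1)}$) from the Taylor estimates in the previous subsection, compute $\mathrm{disc}(\cO_t) = \mathrm{disc}(f_t)$ from the root asymptotics, and combine with a Cusick-Remak type lower bound on the regulator of a totally real cubic order to force the index $[\cO_t^{\times} : N_t]$ to be $1$.

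Let $\theta^{(2)}, \theta^{(3)}$ denote the other two (real) roots of $f_t$, with $\theta^{(3)} < 0$ forced by Vieta since the constant term of $f_t$ is $+1$. The relation $\theta^{(1)}\theta^{(2)}\theta^{(3)} = -1$, combined with $\theta \sim 1/t$ from~\eqref{eq:firstgap} and $\theta^{(2)} \sim a_t^{-1}$ from~\eqref{eq:firstbound}, yields
\[
\log|\theta^{(1)}| = -(1+o(1))\log t, \quad \log|\theta^{(2)}| = -(\alpha + o(1))\log t, \quad \log|\theta^{(3)}| = (1 + \alpha + o(1))\log t.
\]
Analogously, $a_t\theta^{(1)} \to 0$ gives $\log|a_t\theta^{(1)} - 1| = o(1)$, \eqref{eq:secondgap} gives $\log|a_t\theta^{(2)} - 1| = -(1 + 2\alpha + o(1))\log t$, and $|N(a_t\theta - 1)| = 1$ forces $\log|a_t\theta^{(3)} - 1| = (1 + 2\alpha + o(1))\log t$. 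Taking the $2\times 2$ determinant of the logs for the first two embeddings,
\[
R_t = \bigl|\log|\theta^{(1)}|\log|a_t\theta^{(2)} - 1| - \log|\theta^{(2)}|\log|a_t\theta^{(1)} - 1|\bigr| = (1 + 2\alpha + o(1))(\log t)^2.
\]
A parallel computation of $\mathrm{disc}(\cO_t) = \prod_{i<j}(\theta^{(i)} - \theta^{(j)})^2$ using the same root estimates shows the three differences are of respective orders $t^{-\alpha}, t^{1+\alpha}, t^{1+\alpha}$, whence $\mathrm{disc}(\cO_t) = \Theta(t^{4+2\alpha})$ and $\log\mathrm{disc}(\cO_t) = (4 + 2\alpha)\log t + O(1)$.

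Suppose, for contradiction, that $m := [\cO_t^{\times} : N_t] \geq 2$. Invoking a Cusick-Remak type lower bound $R_{\cO} \geq \frac{1}{16}(\log\mathrm{disc}(\cO))^2(1 + o(1))$ valid for totally real cubic orders, we would have
\[
(1 + 2\alpha + o(1))(\log t)^2 = R_t = m R_{\cO_t} \geq \frac{(4+2\alpha)^2}{8}(\log t)^2 (1 + o(1)).
\]
Dividing by $(\log t)^2$ and letting $t \to +\infty$ would give $1 + 2\alpha \geq (4+2\alpha)^2/8 = 2 + 2\alpha + \alpha^2/2$, i.e., $-1 \geq \alpha^2/2$, which is impossible. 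Hence $m = 1$, establishing the claim.

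The only delicate point is the applicability of the Cusick-Remak bound: it is classically formulated for the regulator of the \emph{maximal} order of a totally real cubic field in terms of the field discriminant, whereas we need it for the possibly non-maximal order $\cO_t$ and its own discriminant. This can be addressed either by adapting Cusick's Minkowski-theoretic argument directly to $\lambda(\cO_t^{\times})$ in the trace-zero hyperplane, or by separately controlling the conductor index $[\cO_{\sK_t} : \cO_t]$ via the explicit form of $f_t$ (for instance by means of Dedekind's criterion), thereby ensuring that $d_{\sK_t}$ remains comparable to $\mathrm{disc}(f_t)$. Beyond this technicality, no further obstruction intervenes; a naive Minkowski-in-the-logarithmic-lattice argument instead fails because units such as $\theta(a_t\theta - 1)^{-1} \in \cO_t^{\times}$ have all three conjugate-logarithms of order $\log t$ without any distinguished "large" coordinate.
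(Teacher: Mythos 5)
Your proposal is correct and follows essentially the same route as the paper: compute the relative regulator of the subgroup generated by $\theta_t^{(1)}$ and $a_t\theta_t^{(1)}-1$, compute the discriminant of $\Or_t$ from the root asymptotics, and play these off against Cusick's lower bound $R\geq \tfrac{1}{16}\log^2(D/4)$ so that an index $\geq 2$ is impossible (the paper outsources this last index step to~\cite[Corollary 2.2]{David-Shapira}, and likewise merely asserts that Cusick's bound adapts from the maximal order to $\Or_t$ — the same "delicate point" you flag). One remark worth making: your intermediate asymptotics disagree with the paper's, and yours appear to be the correct ones. The norm relation $\prod_i|a_t\theta_t^{(i)}-1|=1$ together with $a_t\theta_t^{(1)}\to 0$ and $a_t\theta_t^{(3)}=\Theta(t^{1+2\alpha})$ forces $\log|a_t\theta_t^{(2)}-1|=-(1+2\alpha+o(1))\log t$, i.e.\ $|\eps_t^{(2)}|=\Theta(t^{-2\alpha})$, whereas~\eqref{eq:secondlimit} asserts $\log|\eps_t^{(2)}|/\log t\to 1-3\alpha$; consequently the relative regulator is $(1+2\alpha+o(1))\log^2 t$ as you find, not $\Theta(\log^2(t^{3\alpha}))$. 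Similarly your $D_t=\Theta(t^{4+2\alpha})$ accounts for the factor $(\theta_t^{(2)}-\theta_t^{(1)})^2=\Theta(t^{-2\alpha})$ that the paper's $\Theta\bigl((t^{1+\alpha}/\eps_t^{(1)})^4\bigr)$ omits. Neither discrepancy affects the truth of the proposition — both sets of constants satisfy the required inequality $R'_t/\log^2(D_t/4)<1/8$ — but the corrected value of $\eps_t^{(2)}$ does feed into the computation of $w_t'$ in the proof of Theorem~\ref{thm:main}, so the point is not purely cosmetic.
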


The proof relies on the following inequality between the discriminant and the regulator of a totally real cubic field, established by Cusick in~\cite{Cusick}.
\begin{prop}
	\label{prop:Cusick}
	Let $\sK$ be a totally real cubic number field, $\Or$ an order of $\sK$ of discriminant $D$ and regulator $R$. Then 
	\eq{\frac{R}{\log^2(D/4)}\geq \frac{1}{16}\;.}
\end{prop}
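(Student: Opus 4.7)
The plan is to compare the regulator $R_0$ of the subgroup of $\Or_t^{\times}/\{\pm 1\}$ generated by $\theta_t^{(1)}$ and $a_t\theta_t^{(1)}-1$ with the full regulator $R$ of $\Or_t^{\times}$: since $R_0=mR$ where $m\in\bZ_{>0}$ is the index of this subgroup, it suffices to prove $R_0/R<2$ for $t$ sufficiently large to force $m=1$. I will extract an upper bound on $R_0$ from the root asymptotics of \textsection\ref{subsec:linearunits}, and a lower bound on $R$ from Proposition~\ref{prop:Cusick}.

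To handle $R_0$, I first assemble the leading-order asymptotics of the six logarithms of absolute values entering the regulator matrix. From \eqref{eq:firstgap}, \eqref{eq:firstbound}, and Vieta's formula $\theta_t^{(1)}\theta_t^{(2)}\theta_t^{(3)}=-1$, one obtains $\log|\theta_t^{(1)}|\sim -\log t$, $\log|\theta_t^{(2)}|\sim -\alpha\log t$ and $\log|\theta_t^{(3)}|\sim(1+\alpha)\log t$. For the linear units, $\log|a_t\theta_t^{(1)}-1|=\o(1)$ since $a_t\theta_t^{(1)}\to 0$, and $\log|a_t\theta_t^{(3)}-1|\sim(1+2\alpha)\log t$ since $a_t\theta_t^{(3)}\sim -t^{1+2\alpha}$. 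The remaining entry $\log|a_t\theta_t^{(2)}-1|\sim-(1+2\alpha)\log t$ is then forced by the unit-norm identity $\sum_i\log|a_t\theta_t^{(i)}-1|=0$. Taking the $2\times 2$ minor of the $3\times 2$ log matrix indexed by the first two embeddings gives
\eq{R_0=(1+2\alpha)\log^2 t+\o(\log^2 t)\;.}

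For the discriminant, monogenicity of $\Or_t$ yields $D_t=\operatorname{disc}(f_t)=\prod_{i<j}(\theta_t^{(i)}-\theta_t^{(j)})^2$, and the differences involving $\theta_t^{(3)}$ dominate, so $\log D_t\sim(4+2\alpha)\log t$. Proposition~\ref{prop:Cusick} then provides
\eq{R\geq\frac{\log^2(D_t/4)}{16}=\frac{(2+\alpha)^2}{4}\log^2 t+\o(\log^2 t)\;.}
Combining the two bounds, $m=R_0/R\leq(1+\o(1))\cdot 4(1+2\alpha)/(2+\alpha)^2$. The inequality $4(1+2\alpha)<2(2+\alpha)^2$ simplifies to $0<2+\alpha^2$, true for every real $\alpha$; hence $m<2$ for $t$ large enough, and being a positive integer $m=1$, as required.

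The main technical concern is verifying that the $\o(\log t)$ errors in the individual log entries aggregate into $\o(\log^2 t)$ errors in both $R_0$ and $\log^2(D_t/4)$, which will follow routinely from the quantitative estimates of \textsection\ref{subsec:linearunits}. The comfortable gap between the limiting ratio $4(1+2\alpha)/(2+\alpha)^2$—bounded above by $96/81\approx 1.19$ on the range $\alpha\in(0,1/4)$—and the threshold $2$ leaves ample slack for this analysis.
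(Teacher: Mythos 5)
Your proposal does not prove the statement under review; it proves a different one. Proposition~\ref{prop:Cusick} is a \emph{general} inequality, due to Cusick, valid for every order $\Or$ of every totally real cubic field $\sK$: the regulator $R$ is bounded below by $\frac{1}{16}\log^2(D/4)$ purely in terms of the discriminant $D$. Your argument, by contrast, concerns only the particular orders $\Or_t=\bZ[\theta_t^{(1)}]$ of the paper's construction, and what it aims to establish is Proposition~\ref{prop:fundunits} (that the pair of linear units generates the full unit group modulo torsion, i.e.\ that the index $m$ equals $1$). Worse, as a proof of the present statement it is circular: the lower bound on $R$ that you feed into your index estimate is, by your own account, supplied by Proposition~\ref{prop:Cusick} itself --- precisely the inequality you were asked to prove. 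Nothing in your text contains the actual content required here, namely a regulator--discriminant inequality for an \emph{arbitrary} totally real cubic order; that is an analytic/geometry-of-numbers statement which makes no reference to the parameters $t$, $a_t$, $\alpha$ or the polynomials $f_t$, and no amount of asymptotics for the roots $\theta_t^{(i)}$ can substitute for it.

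For comparison, the paper does not derive this inequality from its own estimates either: it quotes Cusick's theorem from~\cite{Cusick}, proved there for the maximal order of a totally real cubic field, and records the observation that Cusick's proof adapts to an arbitrary order of $\sK$; that citation together with the adaptation remark is the entire intended proof. As an aside, read instead as an argument for Proposition~\ref{prop:fundunits}, your strategy --- compare the relative regulator of the two linear units against Cusick's lower bound so as to force the index below $2$ --- is exactly the paper's, via~\cite[Corollary 2.2]{David-Shapira} and the inequality~\eqref{eq:bounds}; your use of the exact norm identity $\sum_i\log|a_t\theta_t^{(i)}-1|=0$ to pin down $\log|a_t\theta_t^{(2)}-1|$ is in fact a clean and robust alternative to the paper's second Taylor expansion around $a_t^{-1}$ (and your resulting exponents differ from the paper's intermediate claim~\eqref{eq:secondlimit}, a point worth scrutinizing on its own). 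But that is a different proposition from the one under review, so the present attempt must be judged as missing its target entirely.
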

Cusick's result if formulated for the maximal order of a totally real cubic field $\sK$, but the proof is easily adapted to any order of $\sK$. Proposition~\ref{prop:Cusick} implies, in particular, that in order to show $(\theta_t^{(1)},a_t\theta_t^{(1)}-1)$ is a fundamental system of units for $\Or_t$, it suffices to establish the inequality
\eql{eq:bounds}{0\neq \frac{R_t'}{\log^2(D_t/4)}<\frac{1}{8}}
for $D_t$ the discriminant of $\Or_t$ and $R'_t$ the relative regulator of $(\theta_t^{(1)},a_t\theta_t^{(1)}-1)$; the straightforward argument for this is presented in~\cite[Corollary 2.2]{David-Shapira}.

\begin{proof}[Proof of Proposition~\ref{prop:fundunits}]
	The discriminant $D_t$ of $\Or_t$ is 
	\eq{\begin{vmatrix} 1 & \theta_t^{(1)} & (\theta_t^{(1)})^2 \\ 1 & \theta_t^{(2)} & (\theta_t^{(2)})^2 \\ 1 & \theta_t^{(3)} & (\theta_t^{(3)})^{2} \end{vmatrix}^2=(\theta_t^{(2)}-\theta_t^{(1)})^2(\theta_t^{(3)}-\theta_t^{(1)})^2(\theta_t^{(3)}-\theta_t^{(2)})^2\;.}
From~\eqref{eq:firstgap},~\eqref{eq:firstlimit},~\eqref{eq:secondgap},~\eqref{eq:secondlimit} and Vieta's formulas we find that
	\eq{\theta_t^{(1)}=\frac{\eps_t^{(1)}}{t}\;, \quad \theta_t^{(2)}=\frac{1}{a_t}+\frac{\eps_t^{(2)}}{ta_t}=\Theta(t^{-\alpha})\;, \quad \theta_t^{(3)}=-\frac{1}{\theta_t^{(1)}\theta_t^{(2)}}=\Theta\biggl(\frac{t^{1+\alpha}}{\eps_t^{(1)}}\biggr)\;.}
	These lead to the estimate
\eq{D_t= \Theta\biggl(\frac{t^{1+\alpha}}{\eps_t^{(1)}}\biggr)^{4}\;.}
We now turn to estimating the relative regulator $R_t'$, which equals
\eqs{\begin{vmatrix} \log|\theta_t^{(1)}| & \log|a_t\theta_t^{(1)}-1| \\ \log|\theta_t^{(2)}| & \log|a_t\theta_t^{(2)}-1|   \end{vmatrix}&=\log|\theta_t^{(1)}|\log|a_t\theta_t^{(2)}-1|-\log|\theta_t^{(2)}|\log|a_t\theta_t^{(1)}-1|\\
&=(-\log{t}+\log|\eps_t^{(1)}|)(-\log{t}+\log|\eps_t^{(2)}|)+\o(1)\\
&=\log^2t\biggl(1-\frac{\log|\eps_t^{(1)}|}{\log t} -\frac{\log|\eps_t^{(2)}|}{\log t}\biggr)+\log|\eps_t^{(1)}|\log|\eps_t^{(2)}|+o(1)\\
&= \Theta(\log^2(t^{3\alpha}))\;.}
Given our bounds on $\alpha$, it is now apparent that~\eqref{eq:bounds} holds for all $t$ sufficiently large, purely in terms of $\alpha$.
\end{proof}

\subsection{Proof of Theorem~\ref{thm:main}}
We work with the orders $\Or_t=\bZ[\theta_t^{(1)}]$ defined as above, keeping with the choice of parameters (and with the notation) we already made. Let $\theta_t^{(3)}<0$ be the third root of $f_t(X)$. The shape of the unit group $\Or_{t}^{\times}$ is the equivalence class, in $\SL_2(\bZ)\bsl \bH$, of the lattice 
\eq{\langle (\log|\theta_t^{(1)}|,\log{|\theta_t^{(2)}|},\log{|\theta_t^{(3)}|}),(\log|a_t\theta_t^{(1)}-1|,\log|a_t\theta_t^{(2)}-1|,\log|a_t\theta_t^{(3)}-1|) \rangle_{\bZ}\subset H\;,}
identified isometrically with a lattice in $\bR^2$. As shapes are invariant under scaling, we can identify the previous lattice in $H$ with the lattice 
\eq{\biggl\langle \biggl(-\log{|\theta_t^{(1)}|}-\frac{1}{2}\log|\theta_t^{(2)}|,-\frac{\sqrt{3}}{2}\log|\theta_t^{(2)}|\biggr), \biggl(-\log|a_t\theta_t^{(1)}-1|-\frac{1}{2}\log{|a_t\theta_t^{(2)}-1|},-\frac{\sqrt{3}}{2}\log|a_t\theta_t^{(2)}-1|\biggr)  \biggr\rangle_{\bZ}}
in $\bR^2$, via the similarity map 
\eq{H\to \bR^2\;, \quad (-1,0,1)\mapsto (1,0)\;, \;(0,-1,1)\mapsto (1/2,\sqrt{3}/2)\;,}
identifying $H$  with the trace-zero hyperplane in $\bR^{3}$ via the ordering $\sigma_i(\theta^{(1)})=\theta^{(i)}$, $i=1,2,3$, of the embeddings.
For ease of notation, let 
\eq{v_t=\biggl(-\log{|\theta_t^{(1)}|}-\frac{1}{2}\log|\theta_t^{(2)}|,-\frac{\sqrt{3}}{2}\log|\theta_t^{(2)}|\biggr)\;,} \eq{w_t=\biggl(-\log|a_t\theta_t^{(1)}-1|-\frac{1}{2}\log{|a_t\theta_t^{(2)}-1|},-\frac{\sqrt{3}}{2}\log|a_t\theta_t^{(2)}-1|\biggr)}
be the two given $\bZ$-independent generators of the lattice. Combining~\eqref{eq:firstbound},~\eqref{eq:firstgap} and~\eqref{eq:secondgap}, we can write
\eq{v_t=\log{t}\biggl(1-\frac{\log|\eps_1^{(t)}|}{\log{t}}-\frac{1}{2}\frac{\log|a_t^{-1}+\O(a_t^{4-1/\alpha})|}{\log{t}},-\frac{\sqrt{3}}{2}\frac{\log|a_t^{-1}+\O(a_t^{4-1/\alpha})|}{\log{t}}\biggr)=\log{t}\;v_t'\;,}
\eq{w_t=\log{t}\biggl(\frac{1}{2}-\frac{1}{2}\frac{\log{|\eps_t^{(2)}|}}{\log{t}}-\frac{\log{|a_t\O(a_t^{4-1/\alpha})-1|}}{\log{t}},\frac{\sqrt{3}}{2}-\frac{\sqrt{3}}{2}\frac{\log{|\eps_t^{(2)}|}}{\log{t}}\biggr)=\log{t}\;w_t'\;,}
with 
\eq{v_t'\to\biggl(1+\frac{1}{2}\alpha,\frac{\sqrt{3}}{2}\alpha\biggr)\;, \quad w_t'\to \biggl(\frac{3\alpha}{2},\frac{3\sqrt{3}\alpha}{2}\biggr)}
as $t\to\infty$, by virtue of~\eqref{eq:firstlimit} and~\eqref{eq:secondlimit}. In the limit as $t\to\infty$, we thus obtain the equivalence class of 
\eq{\biggl\langle \biggl(\frac{1+\frac{1}{2}\alpha}{3\alpha},\frac{\sqrt{3}}{6}\biggr),\biggl(\frac{1}{2},\frac{\sqrt{3}}{2}\biggr) \biggr\rangle_{\bZ}}
in $\SL_2(\bZ)\bsl \bH$. It is well known (see, for instance, Serre's treatment in~\cite[\textsection VII.2]{Serre}) that this corresponds to the $\SL_2(\bZ)$-equivalence class of $\omega_2/\omega_1\in \bH$ for 
\eq{\omega_1=\frac{1+\alpha/2}{3\alpha}+i\frac{\sqrt{3}}{6}\;, \quad \omega_2=\frac{1}{2}+i\frac{\sqrt{3}}{2}\;;}
letting $\alpha'=\frac{1+\alpha/2}{3\alpha}$, we compute 
\eq{\frac{\omega_2}{\omega_1}=\frac{\alpha'/2+1/4}{\alpha'^2+1/12}+i\frac{\sqrt{3}\alpha'/2-\sqrt{3}/12}{\alpha'^2+1/12}\;.}
As $\alpha$ tends to $0$, the ratio $\omega_2/\omega_1$ tends to $0$, so that its $\SL_2(\bZ)$-equivalence class in the modular surface tends to the cusp.

\footnotesize


\begin{thebibliography}{99}
	
	\bibitem{ABC}
	N.C.~Ankeny, R.~Brauer, S.~Chowla, \emph{A note on the class-numbers of algebraic number fields}, Amer.~J.~Math. \textbf{78} (1956), 51--61.
	
	\bibitem{Bhargava-Harron}
	M.~Bhargava, P.~Harron, \emph{The equidistribution of lattice shapes of rings of integers in cubic, quartic, and quintic number fields}, Compos.~Math. \textbf{152} (2016), 1111--1120.
	
	\bibitem{Cassels}
	J.W.S.~Cassels, \emph{The product of $n$ inhomogeneous linear forms in $n$ variables}, J.~London Math.~Soc. \textbf{27} (1952), 485--492.
	
	\bibitem{Cusick}
	T.W.~Cusick. \emph{Lower bounds for regulators}. Number theory, Noordwijkerhout 1983, 63--73. Lecture Notes in Math., \textbf{1068}. Spriunger-Verlag, Berlin, 1984.
	
	\bibitem{Dang-Li}
	N.-T.~Dang, N.~Gargava and J.~Li, \emph{Density of shapes of periodic tori in the cubic case}, \texttt{arXiv:2502.12754v1} (2025).
	
	\bibitem{David-Shapira}
	O.~David, U.~Shapira, \emph{Dirichlet shapes of unit lattices and escape of mass}, Int.~Math.~Res.~Not. IMRN 2018, no.~9, 2810--2843.
	
	\bibitem{ELMV}
	M.~Einsiedler, E.~Lindenstrauss, P.~Michel, A.~Venkatesh, \emph{Distribution of periodic torus orbits on homogeneous spaces}, Duke Math.~J. \textbf{148} (2009), 119--174.
	
	\bibitem{Grundman}
	H.G.~Grundman, \emph{Systems of fundamental units in cubic orders}, J.~Number Theory \textbf{50} (1995), 119--127.
	

	
	\bibitem{Margulis}
	G.A.~Margulis. \emph{On some aspects of the theory of Anosov systems}. With a survey by Richard Sharp: Periodic orbits of hyperbolic flows. Translated from the Russian by Valentina Vladimirovna Szulikowska. Springer Monogr.~Math. Springer-Verlag, Berlin, 2004. vi+139 pp.
	
	\bibitem{Minemura-first}
	K.~Minemura, \emph{On totally real cubic fields whose unit groups are of type $\{\theta+r,\theta+s \}$}, Proc.~Japan Acad.~Ser.~A Math.~Sci. \textbf{74} (1998), 156--159.
	
	\bibitem{Minemura-second}
	K.~Minemura, \emph{On totally real cubic orders whose unit groups are of type $\langle a\theta +b,c\theta+d\rangle$}, Tokyo J.~Math. \textbf{27} (2004), 393--410.
	
	
	
	\bibitem{Neukirch}
	J.~Neukirch. \emph{Algebraic number theory}. Translated from the 1992 German original and with a note by Norbert Schappacher. With a foreword by G.~Harder. Grundlehren Math.~Wiss., \textbf{322} [Fundamental Principles of Mathematical Sciences]. Springer-Verlag, Berlin, 1999. xviii+571 pp.
	
	\bibitem{Oh}
	H.~Oh, \emph{Finiteness of compact maximal flats of bounded volume}, Ergodic Theory Dynam.~Systems \textbf{24} (2004), 217--225.<
	
	\bibitem{mathoverflow}
	A.~Reznikov, \emph{How random are unit lattices in number fields?}, \href{https://mathoverflow.net/questions/41716/how-random-are-unit-lattices-in-number-fields}{https://mathoverflow.net/questions/41716/how-random-are-unit-lattices-in-number-fields}.
	
	\bibitem{Serre}
	J.-P.~Serre. \emph{Cours d'arithmétique} (French). Deuxième édition revue et corrigée. Le Mathématicien, No. 2 [The Mathematician]. Presses Universitaires de France, Paris, 1977. 188 pp.
	
	\bibitem{Shapira-Weiss}
	U.~Shapira, B.~Weiss, \emph{On the Mordell-Gruber spectrum}, Int.~Math.~Res.~Not.~IMRN 2015, no.~14, 5518--5559.
	
	
	\bibitem{Thomas}
	E.~Thomas, \emph{Fundamental units for orders in certain cubic number fields}, J.~Reine Angew.~Math. \textbf{310} (1979), 33--55.
	
	\bibitem{Washington}
	L.C.~Washington, \emph{Class numbers of the simplest cubic fields}, Math.~Comp. \textbf{48} (1987), 371--384.
	
	
\end{thebibliography}
\end{document}